\numberwithin{equation}{section}
\newcommand\rp{{\mathbf{P}}}
\newcommand{\rk}{\mathrm{rk}}
\theoremstyle{plain}
\title[$GV$-subschemes and their embeddings]{\textit{\bfseries GV}-subschemes and their 
embeddings in principally polarized abelian varieties}
\author{Luigi Lombardi and Sofia Tirabassi}
\address{Mathematical Institut\\ University of Bonn, Endenicher Allee 60, 53115, Germany}
 \email{\url{lombardi@math.uni-bonn.de}}
\address{Department of Mathematics\\ The University of Utah, 155 S 1400 E, JWB 233, Salt Lake City, UT 84112-0090, USA}
\email{\url{sofia@math.utah.edu}}
\subjclass[2010]{}
\date{}
\begin{document}

\begin{abstract}
We prove that the embedding of a $GV$-subscheme in a principally polarized abelian variety does not factor through any nontrivial isogeny. 
As an application, we present 
a new proof of a theorem of Clemens--Griffiths identifying the intermediate Jacobian of 
a smooth cubic threefold in $\rp ^4$ to the Albanese variety of its Fano surface of lines.
\end{abstract}

\maketitle

\vspace{-.5cm}\section{Introduction}
Given a principally polarized abelian variety $(A, \Theta)$ with $\dim A=g$, we say that
a reduced subscheme $X\subseteq A$ of dimension $d$ has
\emph{minimal cohomology class} if $[X]=\frac{[\Theta]^{g-d}}{(g-d)!}$ in $H^{2g-2d}(A,\mathbf{Z})$.
Beside the theta divisor of $A$ itself, at the moment there are only two known families of subvarieties having minimal cohomology class. 
These are the Abel--Jacobi images of a symmetric product of a curve into its Jacobian
and the Fano surfaces of lines embedded in intermediate Jacobians attached to smooth cubic threefolds in $\rp^4$ (see \cite{BL} and \cite{CG1972}).
Furthermore, Debarre in \cite{De1995} conjectures that, 
extending in higher dimension questions of Beauville and Ran, these are the only possible examples when $A$ is indecomposable.

The Matsusaka--Ran criterion yields this conjecture in the case of curves. 
Moreover, as shown in \cite{De1995}*{Theorem 5.1}, the only reduced subschemes of a Jacobian having minimal cohomology class are 
the aforementioned Brill--Noether loci. 
Finally, in \cite{Ho2010}, H\"oring proves that if $(A, \Theta)$ is a general intermediate Jacobian of a smooth cubic threefold in $\rp^4$, 
then the Fano surface of lines is the only subscheme having minimal cohomology class, with the exception, of course, of the theta divisor.

In \cite{PP2008} the authors propose to attack Debarre's conjecture by studying a 
cohomological condition on the twisted ideal sheaf $\fas I_X(\Theta)$ of a reduced closed subscheme $X$ in a principally polarized abelian variety $(A,\Theta)$, 
by means of Fourier--Mukai transforms. 
More precisely we say that a subscheme $X$ embedded in $(A,\Theta)$ is a $GV$-\emph{subscheme} if 
$$\codim _{\widehat{A}}\,\big\{\alpha \in \widehat{A}\, | \, h^i(A,\fas I_X(\Theta))>0\big\}\geq i\quad \mbox{ for all }\quad i>0$$
(\emph{cf}. Definition \ref{GVsub}). One of the main results of \cite{PP2008} is that
geometrically nondegenerate\footnote{A $d$-dimensional subvariety $X$ of an abelian variety $A$ is geometrically nondegenerate if 
the kernel of the restriction map $H^0(A,\Omega_A^d)\rightarrow H^0(X_{reg},\Omega^d_{X_{reg}})$ contains no nonzero decomposable $d$-forms.} 
$GV$-subschemes have minimal cohomology class. Furthermore, Pareschi--Popa also prove that 
the only geometrically nondegenerate $GV$-subschemes in $A$ of dimensions $1$ and $g-2$ 
are the Abel--Jacobi images of a curve in its Jacobian and the Brill--Noether loci $W_{g-2}$ respectively.

In this notes we derive some further geometric properties of $GV$-subschemes. 
Our main result is the following:
\begin{thm}\label{thmintr}
Let $X$ be a reduced closed $GV$-subscheme of a principally polarized abelian variety $(A, \Theta)$ and let $\iota:X\hookrightarrow A$ be the closed immersion. 
\begin{enumerate}
\item If $X$ generates $A$ as a group, then the immersion $\iota:X\hookrightarrow A$ does not factor through any nontrivial isogeny.
\item If $X$ is of positive dimension and generates a proper abelian subvariety $J$ of $A$, then $A$ is a product 
of principally polarized abelian varieties having $J$ as one of the factors. Furthermore, $X$ is a $GV$-subscheme in $J$.
\end{enumerate}

\end{thm}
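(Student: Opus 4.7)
Plan. For part (1), I expect the argument to rest on the universal property of the Albanese variety. Suppose $\iota$ decomposes as a closed immersion $f : X \hookrightarrow B$ followed by a nontrivial isogeny $\pi : B \to A$. By universality, $f$ extends uniquely to a morphism $a_f : \mathrm{Alb}(X) \to B$, and $\pi \circ a_f$ coincides with the canonical Albanese morphism $a_\iota : \mathrm{Alb}(X) \to A$ attached to $\iota$. Since $X$ generates $A$ as a group, $a_\iota$ is surjective, so its dual $\widehat A \hookrightarrow \widehat{\mathrm{Alb}(X)}$ is injective. But this dual factors through $\widehat\pi : \widehat A \to \widehat B$, whose kernel $\widehat K$ is nonzero because $\pi$ is nontrivial, a contradiction. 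The $GV$ hypothesis enters here only implicitly: a reduced $0$-dimensional $GV$-subscheme of a nontrivial PPAV is necessarily a single point (an ideal-sequence computation shows $h^1(A, \mathcal I_X(\Theta) \otimes P_\alpha) = n-1$ generically for $n$ points, violating $GV$ unless $n=1$), and a single point cannot generate $A$.

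For part (2), let $J' \subset A$ be the complementary abelian subvariety of $J$ with respect to the principal polarization, so that the addition map $\sigma : J \times J' \to A$ is an isogeny with kernel $\Delta \simeq J \cap J'$. The first goal is to show $\Delta = 0$, equivalently that $\Theta|_J$ is a principal polarization on $J$, which yields the decomposition of $A$ as a product of PPAVs with $J$ as a factor. My plan is to analyze the cohomology of $\mathcal I_X(\Theta) \otimes P_\alpha$ as $\alpha$ varies over the abelian subvariety $J^\perp := \ker(\widehat A \to \widehat J) \subset \widehat A$, of codimension $\dim J$. For $\alpha \in J^\perp$, the bundle $P_\alpha$ restricts trivially to $J$ and hence to $X$; combining the ideal sequence
\[
0 \to \mathcal I_J(\Theta) \otimes P_\alpha \to \mathcal I_X(\Theta) \otimes P_\alpha \to \iota_{J,*}\bigl(\mathcal I_X^J(\Theta|_J)\bigr) \to 0
\]
with the vanishing $H^{>0}(A, \Theta \otimes P_\alpha) = 0$ expresses $h^i(A, \mathcal I_X(\Theta) \otimes P_\alpha)$ in terms of $h^0(J, \Theta|_J)$ and of $h^i(J, \mathcal I_X^J(\Theta|_J))$. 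The $GV$ codimension bounds along $J^\perp$, combined with the Fourier--Mukai sheaf structure of $\mathcal I_X(\Theta)$ guaranteed by $GV$, should then force $h^0(J, \Theta|_J) = 1$.

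Once $A = J \times J'$ is established, the second assertion of (2) that $X$ is a $GV$-subscheme of $J$ follows by Künneth on $\widehat A = \widehat J \times \widehat{J'}$: slicing the $GV$ codimension bound for $X$ in $A$ at a generic $\gamma \in \widehat{J'}$, and using that a single point is a $GV$-subscheme of $J'$, yields the required codimension bounds in $\widehat J$. The main obstacle will be extracting $h^0(J, \Theta|_J) = 1$ in the first step of (2); direct cohomological estimates from $\alpha \in J^\perp$ alone only yield $\dim J \geq 1$, so the argument must exploit the full Fourier--Mukai structure imposed by $GV$ (the Pareschi--Popa theorem that $R\widehat{S}(\mathcal I_X(\Theta))$ is a single sheaf on $\widehat A$).
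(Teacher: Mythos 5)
Your part (1) rests on a false step in duality theory. You claim that since $a_\iota:\mathrm{Alb}(X)\to A$ is surjective, its dual $\widehat{a_\iota}:\widehat A\to\widehat{\mathrm{Alb}(X)}$ is injective. The dual of a surjective homomorphism of abelian varieties is injective only when the kernel is \emph{connected}: multiplication by $n$ on $A$ is surjective, yet its dual (multiplication by $n$ on $\widehat A$) has kernel $\widehat A[n]\neq 0$. In your situation the kernel of $a_\iota=\pi\circ a_f$ is disconnected precisely because $\pi$ is a nontrivial isogeny, so the injectivity claim fails and no contradiction arises. Moreover the argument cannot be repaired while using the $GV$ hypothesis only to dispose of the zero-dimensional case, because the statement it would then prove --- that no generating subvariety factors through a nontrivial isogeny --- is false: given an isogeny $\varphi:B\to A$ and a curve $C$ generating $B$ with $C-C$ avoiding the nonzero points of $\ker\varphi$ (arrangeable when $\dim B\geq 3$), the image $\varphi(C)\cong C$ is a reduced generating subvariety of $A$ whose embedding factors through $\varphi$. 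The $GV$ hypothesis must therefore enter for positive-dimensional $X$, and in the paper it does so essentially: Proposition \ref{prop:tehc} shows $\mathbf{R}S_B\mathbf{R}\Delta_B(a_*\mathcal{O}_X\otimes\varphi^*\mathcal{O}_A(\Theta))\simeq\mathcal{I}_W\otimes M[-g]$ with $\widehat\varphi^*M\simeq L$; Lemma \ref{lem:mample} ($M$-regularity, $\chi=1$, and the support generating $B$) gives that $M$ is ample; Remark \ref{rem:prod} then forces $L$ to be a principal polarization, whence $1=h^0(L)=\deg(\widehat\varphi)\cdot h^0(M)$ and $\deg\varphi=1$.

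For part (2) you set up the same Poincar\'e-reducibility isogeny $\sigma:J\times J'\to A$ as the paper, but you explicitly leave the central step --- that $\Theta|_J$ is principal and $A$ splits --- unresolved, conceding that restriction of cohomology to $\alpha\in J^\perp$ ``only yields $\dim J\geq 1$.'' That concession is accurate, and the missing content is exactly the Fourier--Mukai machinery above applied to the isogeny: a K\"unneth computation of the transform yields $M\simeq M'\boxtimes\mathcal{O}_{\widehat B}$, Lemma \ref{lem:mample} gives $M'$ ample, and then $L\simeq\widehat\psi^*(M'\boxtimes\mathcal{O}_{\widehat B})$ is effective but non-ample, so Remark \ref{rem:prod} splits $\widehat A$ as a product; a computation of $V^0$-loci identifies one factor with $\widehat J$ up to isogeny and $h^0$ considerations upgrade this to an isomorphism with $M'$ principal. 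Your cohomological estimates along $J^\perp$ cannot see any of this. Your K\"unneth-slicing sketch for the final claim (that $X$ is $GV$ in $J$) is plausible and genuinely different from the paper's argument, which instead applies $\mathbf{R}S_J\mathbf{R}\Delta_J$ to the ideal sequence on $J$ and uses torsion-freeness to kill $R^{g-1}$; but as submitted, the proposal contains a fatal error in (1) and an acknowledged missing core in (2).
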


In order to prove Theorem \ref{thmintr} we will use the theory of $GV$ and $M$-\emph{regular sheaves} 
on abelian varieties as described by Pareschi--Popa in \cites{PP2011,PP2011b} (\emph{cf}. Definition \ref{def:M}). 
% Moreover, Pareschi--Popa established a criterion to see whether or not a sheaf is either $GV$ or $M$-regular by means of Fourier--Mukai transforms; this will be 
% one of the main tools in our proofs (see Theorem \ref{thm:alg}). 
We will proceed as follows. First of all we show that in case the embedding 
$\iota$ factors through a nontrivial isogeny $\varphi:B\rightarrow A$, then $\fas{I}_{X/B}\otimes \varphi^* \fas{O}_A(\Theta)$, the ideal sheaf 
of $X$ in $B$ twisted by the pull-back of $\Theta$, is a $GV$-sheaf on $B$. 
By using a characterization of $GV$-sheaves this means that the Fourier-Mukai transform of the dual of $\fas{I}_{X/B}\otimes \varphi^* \fas{O}_A(\Theta)$
can be written as a tensor product of an ideal sheaf of a subscheme of codimension at least two and a
line bundle $M$ on $\widehat B$.
Finally, the last step is to deduce positivity properties on $M$. In particular, by using our generation hypothesis on $X$, 
we show that the line bundle $M$ is ample on $\widehat B$ and moreover that the pull-back $\varphi ^*M$ is a principal polarization.
This immediately yields that $\deg \varphi = h^0(B,\varphi^*M)=1$.

In the last section we give an application of Theorem \ref{thmintr} by providing a new proof of a theorem of Clemens--Griffiths stating that 
the intermediate Jacobian of a smooth cubic threefold in $\rp^4$ is isomorphic to the Albanese variety of its Fano surface of lines 
(\cite{CG1972}*{Theorems 11.19}). Besides Theorem \ref{thmintr} this proof also relies 
on a theorem of H\"{o}ring showing that the Fano surface of lines of a smooth cubic threefold in $\rp^4$ is a $GV$-subscheme (\cite{Ho2007}*{Theorem 1.2}).
% Besides Theorem \ref{thmintr}
% this new proof also relies on a computation carried out by H\"{o}ring showing that the Fano surface of lines $F\subset J(Y)$ is a $GV$-subscheme (\cite{Ho2007}).

\subsection*{Notation}
Throughout this paper we work over an algebraically closed field $K$ of characteristic zero unless otherwise specified.
Given a smooth variety $Z$ over $K$, we will denote by $\mathbf{D}(Z)$ 
the derived category of bounded complexes of quasi-coherent sheaves on $Z$ having coherent cohomology.
If $A$ is an abelian variety, we denote by $0_A$ the neutral element of $A$, by $(-1)_A$ the multiplication by $-1$, 
and by $\widehat A$ the dual abelian variety. Moreover, if 
$\Theta\subseteq A$ is a principal polarization, we denote by $\widehat \Theta\subset \widehat A$ the dual polarization.
% also the abelian variety dual to $A$, that we will denote by $\hat A$ is principally polarizable. We will use the symbol $\hat\Theta$ to indicate a principal polarization on $\hat A$.

\subsection*{Acknowledgments:} We are grateful to Giuseppe Pareschi and Mihnea Popa for introducing us to the subject of minimal cohomology classes. Moreover  we thank
Herb Clemens, Giovanni Cerulli Irelli, Christopher Hacon, Daniel Huybrechts, Elham Izadi and Christian Schnell for several useful conversations and for answering to all our
questions. Finally, special thanks go to Andreas H\"{o}ring who spot a mistake in the first draft of this work.

This project was started when ST, supported by the AWM-NSF Mentoring Grant (NSF award number DMS-0839954), 
was visiting the University of Illinois at Chicago (UIC) while LL was a graduate student there. 
We are grateful to UIC for the nice working environment and the kind hospitality. 
LL was supported by the SFB/TR45 ``Periods, moduli spaces, and arithmetic of algebraic varieties'' of the DFG (German Research Foundation).

\section{Background Material}
\subsection{Fourier--Mukai Transforms}\hfill\par
One of the main tools we will use in this paper 
is the Fourier--Mukai transform between the derived category of an abelian variety $A$ of dimension $g$ and the derived category of its dual $\widehat A$. This is defined as 
follows.
Let $\mathscr P$ be a normalized Poincar\'e line bundle on $A\times \widehat A$, then we define the functor
\[\mathbf R S_A:\mathbf D(A)\longrightarrow \mathbf D(\widehat A)\]
by setting
\[\mathbf RS_A(-)=\mathbf Rp_{\widehat A\:*}(p_A^*(-)\otimes \mathscr P)\]
where $p_A$ and $p_{\widehat A}$ are the first and second projections from the product $A\times \widehat A$, respectively. 
Similarly we can also consider the Fourier--Mukai transform in the other direction, again induced by the Poincar\'e line bundle $\mathscr P$
\[\mathbf R \widehat S_A:\mathbf D(\widehat A)\longrightarrow \mathbf D(A)\]
by setting
\[\mathbf R\widehat S_A(-)=\mathbf Rp_{ A\:*}(p_{\widehat A}^*(-)\otimes \mathscr P).\]
Mukai's inversion theorem \cite{Mu1981}*{Theorem 2.2} tells us that these functors are
equivalences of triangulated categories. More precisely the following formulas hold:
\begin{equation}\label{Mukai duality}
\mathbf R S_A\circ \mathbf R\widehat S_A\simeq (-1_{\widehat{A}})^*\circ[-g],\quad
\mathbf R\widehat S_A\circ\mathbf R S_A\simeq(-1_{A})^*\circ[-g]; 
\end{equation}
where $[-]$ stands for, as usual, the shift functor in a triangulated category. 

For an arbitrary abelian variety $A$ we denote by 
$$\mathbf R \Delta_A: \mathbf D(A)\rightarrow \mathbf D(A),\quad  \fas F\mapsto \mathbf R\mathcal{H}om_A (\fas F,\fas O_A)$$ the derived dual functor.
The commutativity between the derived dual and the Fourier--Mukai transforms $\rr S_A$ and $\rr \widehat S_A$
is described by the following formulas, again due to Mukai (\cite{Mu1981}*{(3.8) and Theorem 3.13}):
\begin{equation}\label{Mukai dual}
 \mathbf R \Delta_{\widehat{A}} \circ \mathbf R S_A\simeq ((-1_{\widehat{A}})^*\circ \mathbf R S_A \circ \mathbf R \Delta_A)[g],\quad  
 \mathbf R \Delta_A \circ \mathbf R \widehat{S}_{A} \simeq ((-1_{A})^*\circ \mathbf R \widehat{S}_{A}
 \circ \mathbf R \Delta_{\widehat{A}})[g].
\end{equation}

\subsection{{\itshape GV}-subschemes}
In this subsection we recall a few facts regarding $GV$-\emph{sheaves} and introduce the main characters of this work, i.e. $GV$-\emph{subschemes}.

Let $A$ be  
an abelian variety of dimension $g$ and let $\mathscr F$ be a coherent sheaf on $A$. For any $i\ge 0$ we define the 
\emph{i-th cohomological support locus of} $\fas F$ as 
\[V^i_A(\mathscr F):=\{\alpha\in \widehat A\:|\: h^i(A, \mathscr F\otimes \alpha)> 0\}.\]
\begin{dfn}
 Given an integer $k\geq 0$, we say that $\mathscr F$ is a $GV_k$-sheaf  
 if one of the following equivalent conditions holds:\footnote{A proof of their equivalence is in \cite{PP2011}*{Lemma 3.6}.} 
\begin{enumerate}
 \item $\mathrm{codim}_{\widehat A}\, V^i_A(\mathscr F)\ge i+k$ for every $i>0$\\
\item $\mathrm{codim}_{\widehat A}\, \mathrm{Supp}\, R^i S_A(\mathscr F)\ge i+k$ for every $i>0$.
\end{enumerate}

\end{dfn}
Usually $GV_0$-sheaves are simply called $GV$, while $GV_1$-sheaves are said $M$-\emph{regular sheaves}. Also note that $M$-regular sheaves are $GV$.
We highlight the following result giving equivalent conditions for a sheaf to be either $GV$ or $M$-regular
 (we refer to \cite{PP2011}*{\cite{PP2011b}} for further generalities regarding $GV_k$-sheaves). 

\begin{thm}[{\cite{PP2011b}*{Theorem 2.3 and Proposition 2.8}}]\label{thm:alg}
 Let $A$ be an abelian variety of dimension $g$  
over an algebraically closed field and let $\fas{F}$ be a coherent sheaf on $A$. 
\begin{enumerate}
\item The sheaf $\fas{F}$ is $GV$ if and only if the cohomology sheaves
$R^iS_A\mathbf R\Delta_A(\fas{F})$ vanish for every $i\neq g$.  Furthermore, if this is the case, then we have $\rk \,R^g S_A\mathbf R\Delta_A(\fas{F}) = \chi(A,\fas{F})$.\\
\item If $\fas{F}$ is a $GV$-sheaf, then it is $M$-regular if and only if $R^g S_A\mathbf R\Delta_A(\fas{F})$ is a torsion free sheaf.
\end{enumerate}
\end{thm}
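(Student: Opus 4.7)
The plan is to exploit Mukai's duality (\ref{Mukai dual}) to convert the codimension conditions defining $GV$ into a sheaf-theoretic concentration condition on the derived dual of $\mathbf{R}S_A(\mathscr{F})$. The key homological algebra input is the codimension--Ext dictionary on a smooth $g$-dimensional variety $W$: every coherent sheaf $\mathcal{G}$ satisfies $\mathrm{codim}\,\mathrm{Supp}\,\mathscr{E}xt^j_{\mathscr{O}_W}(\mathcal{G},\mathscr{O}_W)\geq j$, while the stronger condition $\mathrm{codim}\,\mathrm{Supp}\,\mathcal{G}\geq k$ is equivalent to $\mathscr{E}xt^j(\mathcal{G},\mathscr{O}_W)=0$ for all $j<k$.

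Set $\mathcal{C}^{\bullet} := \mathbf{R}S_A(\mathscr{F})$. By the formulas (\ref{Mukai dual}), the vanishing of $R^i S_A\mathbf{R}\Delta_A(\mathscr{F})$ for all $i\neq g$ is equivalent to the concentration of $\mathbf{R}\Delta_{\widehat A}(\mathcal{C}^\bullet)$ in degree zero as a coherent sheaf, which I shall call $\mathcal{G}$. For the direction ``$\mathcal{G}$ a sheaf $\Longrightarrow \mathscr{F}$ is $GV$'' I apply $\mathbf{R}\Delta_{\widehat A}$ a second time: biduality $\mathbf{R}\Delta_{\widehat A}\circ\mathbf{R}\Delta_{\widehat A}\simeq\mathrm{id}$ holds on perfect complexes, and $\mathcal{C}^\bullet$ is perfect on the smooth variety $\widehat{A}$, so $\mathcal{C}^\bullet \simeq \mathbf{R}\Delta_{\widehat A}(\mathcal{G})$ and hence $R^i S_A(\mathscr{F}) \simeq \mathscr{E}xt^i_{\mathscr{O}_{\widehat A}}(\mathcal{G},\mathscr{O}_{\widehat A})$ has codimension $\geq i$ by the dictionary.

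For the converse I would run the hypercohomology spectral sequence
\[E_2^{p,q} = \mathscr{E}xt^p_{\mathscr{O}_{\widehat A}}(R^{-q} S_A(\mathscr{F}), \mathscr{O}_{\widehat A}) \Longrightarrow \mathcal{H}^{p+q}(\mathbf{R}\Delta_{\widehat A}(\mathcal{C}^\bullet)).\]
The $GV$ hypothesis forces $E_2^{p,q}=0$ whenever $q<0$ and $p<-q$, which immediately gives $\mathcal{H}^i(\mathbf{R}\Delta_{\widehat A}(\mathcal{C}^\bullet))=0$ for $i<0$. Ruling out positive degrees of cohomology is more delicate and I regard it as the main obstacle: my plan is to invoke biduality a second time and match the cohomological amplitude of $\mathbf{R}S_A(\mathscr{F})$, which lies in $[0,g]$, against that of the iterated dual, or alternatively to translate $V^i_A(\mathscr{F})$ into a cohomological locus for $\mathbf{R}\Delta_A(\mathscr{F})$ via Grothendieck--Serre duality and argue symmetrically. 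Once the concentration is established, the rank identity in (1) reduces to an Euler-characteristic computation: Serre duality yields $\chi(A, \mathbf{R}\Delta_A(\mathscr{F})) = (-1)^g \chi(A, \mathscr{F})$, while for any bounded complex $\mathcal{K}^\bullet$ on $A$ the alternating sum of generic ranks of the cohomology sheaves of $\mathbf{R}S_A(\mathcal{K}^\bullet)$ equals $\chi(A, \mathcal{K}^\bullet \otimes \alpha) = \chi(A, \mathcal{K}^\bullet)$ by numerical invariance under twist by $\alpha \in \widehat{A}$; combining these yields $\rk R^g S_A \mathbf{R}\Delta_A(\mathscr{F}) = \chi(A, \mathscr{F})$.

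For part (2) I refine the dictionary of part (1). Assuming $\mathscr{F}$ is $GV$, so that $\mathcal{G}$ is a sheaf, $M$-regularity of $\mathscr{F}$ is by definition $\mathrm{codim}\, V^i_A(\mathscr{F}) \geq i+1$ for all $i>0$. Via the identification $R^i S_A(\mathscr{F}) \simeq \mathscr{E}xt^i(\mathcal{G}, \mathscr{O}_{\widehat A})$ obtained above, this translates exactly into the Ext-codimension condition characterising Serre's property $S_1$ for $\mathcal{G}$ on the smooth variety $\widehat A$, which is equivalent to $\mathcal{G}$ being torsion-free.
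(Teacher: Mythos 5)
This statement is imported by the paper from Pareschi--Popa \cite{PP2011b} without proof, so your proposal can only be measured against the standard argument there; your translation of the $GV$ condition into concentration of $\mathbf{R}\Delta_{\widehat A}(\mathcal{C}^\bullet)$, the biduality argument for the direction ``$\mathcal{G}$ a sheaf $\Rightarrow$ $\mathscr{F}$ is $GV$,'' and the rank computation via generic base change and deformation invariance of $\chi$ are all correct and in the spirit of the original. But there is a genuine gap, which you flag yourself: in the direction ``$GV \Rightarrow$ concentration'' your spectral sequence only kills $\mathcal{H}^i(\mathbf{R}\Delta_{\widehat A}(\mathcal{C}^\bullet))$ for $i<0$, and neither of your fallback plans for $i>0$ works as stated. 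Amplitude matching fails because the $E_2$ page occupies $0\le p\le g$, $-g\le q\le 0$, so a priori $\mathbf{R}\Delta_{\widehat A}(\mathcal{C}^\bullet)$ lives in $[-g,g]$, not $[-g,0]$; and a second dualization cannot detect a torsion top cohomology sheaf $\mathcal{H}^t$ with $t>0$, since its contribution to the bidual begins with $\mathcal{H}om(\mathcal{H}^t,\mathscr{O}_{\widehat A})=0$, so no contradiction with $\mathcal{C}^\bullet\in\mathbf{D}^{[0,g]}$ is forced. The irony is that the half you are missing is the \emph{unconditional} half: by \eqref{Mukai dual} it says exactly that $R^iS_A\mathbf{R}\Delta_A(\mathscr{F})=0$ for $i>g$, which holds for every coherent sheaf, $GV$ or not. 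Indeed, in the spectral sequence
\begin{equation*}
R^pS_A\big(\mathscr{E}xt^q_{\mathscr{O}_A}(\mathscr{F},\mathscr{O}_A)\big)\Longrightarrow R^{p+q}S_A\mathbf{R}\Delta_A(\mathscr{F})
\end{equation*}
one has $R^pS_A(\mathcal{G})=0$ whenever $p>\dim\operatorname{Supp}\mathcal{G}$ (the fibers of $\operatorname{Supp}(p_A^*\mathcal{G}\otimes\mathscr{P})\rightarrow\widehat A$ have dimension at most $\dim\operatorname{Supp}\mathcal{G}$), while your own dictionary gives $\dim\operatorname{Supp}\,\mathscr{E}xt^q\le g-q$; hence all terms with $p+q>g$ vanish. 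With this two-line supplement your proof of (1) closes; note that Pareschi--Popa instead obtain both vanishings simultaneously by a different engine (Hacon's criterion: twisting by Fourier--Mukai transforms of sufficiently ample line bundles and using Serre vanishing and duality), so your spectral-sequence route, once completed, is a legitimately more self-contained alternative.

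In part (2) there is a smaller imprecision: ``$S_1$ is equivalent to torsion-free'' only for sheaves of full support, and $\mathcal{G}$ need not have full support --- its rank is $\chi(A,\mathscr{F})$, which can be zero for a $GV$-sheaf, in which case $\mathcal{G}$ is torsion. You should check directly that the equivalence degenerates correctly in the presence of torsion: if $T\subseteq\mathcal{G}$ is a torsion subsheaf whose support has codimension $c\ge 1$, then at a generic point $x$ of a codimension-$c$ component the local ring has global dimension $c$, so $\mathscr{E}xt^{c+1}(\mathcal{G}/T,\mathscr{O}_{\widehat A})_x=0$ and $\mathscr{E}xt^{c}(\mathcal{G},\mathscr{O}_{\widehat A})_x$ surjects onto $\mathscr{E}xt^{c}(T,\mathscr{O}_{\widehat A})_x\ne 0$ (local duality); hence $\operatorname{codim}\operatorname{Supp}\,\mathscr{E}xt^c(\mathcal{G},\mathscr{O}_{\widehat A})=c$, so $M$-regularity fails exactly when torsion-freeness does. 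Conversely, if $\mathcal{G}$ is torsion-free, Auslander--Buchsbaum shows $\mathscr{E}xt^i(\mathcal{G},\mathscr{O}_{\widehat A})_x\ne 0$ at a codimension-$i$ point would force $\operatorname{depth}\mathcal{G}_x=0$, contradicting that all associated primes of $\mathcal{G}$ are minimal. With these two points supplied, the proposal is a complete and correct proof.
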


\begin{dfn}\label{GVsub}
We say that a reduced closed subscheme $X$ of a principally polarized abelian variety $(A, \Theta)$ defined by an ideal sheaf 
$\fas{I}_X\subset \fas{O}_A$ is a $GV$-\emph{subscheme} if $\fas{I}_X(\Theta):=\fas{I}_X\otimes \fas{O}_A(\Theta)$ is a $GV$-sheaf.
\end{dfn}

A useful criterion to detect $GV$-subschemes is the following:

\begin{prop}[{\cite{PP2008}*{Lemma 3.3}}]\label{lem:trfonA}
 Let  $\iota:X\hookrightarrow A$ be a reduced closed subscheme of a principally polarized abelian variety $(A, \Theta)$ of dimension $g$. \
Then $X$ is a $GV$-subscheme if and only if $\iota_*\OO{X}(\Theta)$ is an $M$-regular sheaf with $\chi(A,\iota_*\OO{X}(\Theta))=1$. In particular, if $X$ is a 
$GV$-subscheme, then there is an isomorphism of complexes
 \begin{equation}\label{eq:L}
  \mathbf{R}S_A\mathbf{R}\Delta_A(\iota_*\OO{X}(\Theta))\simeq \fas{I}_Z\otimes L[-g],
 \end{equation}
where $\fas{I}_Z\subset \OO{\widehat A}$ is an ideal sheaf whose zero locus does not contain any divisorial components and $L$ is the reflexive hull of $R^g
S_A\mathbf{R}\Delta_A(\iota_*\OO{X}(\Theta))$.
\end{prop}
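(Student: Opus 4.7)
The strategy is to compare cohomological conditions on the twisted ideal sheaf $\mathcal{I}_X(\Theta)$ with those on $\iota_*\mathcal{O}_X(\Theta)$ by exploiting the short exact sequence
\begin{equation*}
0 \to \mathcal{I}_X(\Theta) \to \mathcal{O}_A(\Theta) \to \iota_*\mathcal{O}_X(\Theta) \to 0.
\end{equation*}
The middle term is cohomologically trivial after any twist by $\alpha \in \widehat A$: since $\mathcal{O}_A(\Theta) \otimes \alpha$ is a translate of the principal polarization, one has $h^0(\mathcal{O}_A(\Theta) \otimes \alpha) = 1$ and $h^i(\mathcal{O}_A(\Theta) \otimes \alpha) = 0$ for $i > 0$. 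The associated long exact sequence thus degenerates into isomorphisms $H^{i+1}(A, \mathcal{I}_X(\Theta) \otimes \alpha) \cong H^i(A, \iota_*\mathcal{O}_X(\Theta) \otimes \alpha)$ for every $i \geq 1$, together with a four-term exact sequence
\begin{equation*}
0 \to H^0(\mathcal{I}_X(\Theta) \otimes \alpha) \to H^0(\mathcal{O}_A(\Theta) \otimes \alpha) \to H^0(\iota_*\mathcal{O}_X(\Theta) \otimes \alpha) \to H^1(\mathcal{I}_X(\Theta) \otimes \alpha) \to 0
\end{equation*}
in low degrees.

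The isomorphisms immediately yield $V^{i+1}_A(\mathcal{I}_X(\Theta)) = V^i_A(\iota_*\mathcal{O}_X(\Theta))$ for all $i \geq 1$, so the $GV$-inequalities at levels $\geq 2$ are literally the $M$-regularity inequalities at levels $\geq 1$. The subtle interplay happens at low degree, and is resolved by the following observation: $V^0_A(\mathcal{I}_X(\Theta)) = \{\alpha \in \widehat A \mid X \subseteq \Theta_\alpha\}$ is a proper closed subset of $\widehat A$, since a reduced proper closed subscheme of $A$ cannot lie in every translate of $\Theta$. Assuming $\iota_*\mathcal{O}_X(\Theta)$ is $M$-regular with $\chi = 1$, for generic $\alpha$ the vanishing $h^i(\iota_*\mathcal{O}_X(\Theta) \otimes \alpha) = 0$ for $i \geq 1$ gives $h^0(\iota_*\mathcal{O}_X(\Theta) \otimes \alpha) = 1$, and the four-term sequence shows that $H^1(\mathcal{I}_X(\Theta) \otimes \alpha) = 0$ precisely when the restriction map $K \to K$ is nonzero, which happens outside $V^0_A(\mathcal{I}_X(\Theta))$; hence $\mathcal{I}_X(\Theta)$ is $GV$. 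Conversely, if $\mathcal{I}_X(\Theta)$ is $GV$, then for $\alpha$ generic outside $V^0_A(\mathcal{I}_X(\Theta))$ and all $V^i_A(\mathcal{I}_X(\Theta))$ with $i \geq 1$, the four-term sequence and the isomorphisms force $h^0(\iota_*\mathcal{O}_X(\Theta) \otimes \alpha) = 1$ and $h^i(\iota_*\mathcal{O}_X(\Theta) \otimes \alpha) = 0$ for $i \geq 1$, so $M$-regularity holds and $\chi = 1$.

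For the final decomposition \eqref{eq:L}, once $\iota_*\mathcal{O}_X(\Theta)$ is known to be $M$-regular with $\chi = 1$, I invoke Theorem \ref{thm:alg}: part~(1) concentrates $\mathbf{R}S_A \mathbf{R}\Delta_A(\iota_*\mathcal{O}_X(\Theta))$ in cohomological degree $g$ with generic rank $\chi = 1$, while part~(2) shows this rank-one sheaf is torsion free. On the smooth variety $\widehat A$, a rank-one torsion-free coherent sheaf embeds into its reflexive hull---a line bundle $L$---and is therefore of the form $L \otimes \mathcal{I}_Z$ for an ideal sheaf $\mathcal{I}_Z \subset \mathcal{O}_{\widehat A}$ whose zero locus has codimension at least two (equivalently, contains no divisorial components). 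The main obstacle throughout is the low-degree comparison of the second paragraph: one must identify precisely when the restriction map $H^0(\mathcal{O}_A(\Theta) \otimes \alpha) \to H^0(\iota_*\mathcal{O}_X(\Theta) \otimes \alpha)$ is an isomorphism, which is exactly the point where the geometric input (generic theta translates do not contain $X$) meets the numerical input ($\chi = 1$).
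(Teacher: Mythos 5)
Your proof is correct and follows essentially the same route as the Pareschi--Popa argument that the paper cites for this statement (the paper itself gives no proof of it): twist the ideal-sheaf sequence by $\alpha\in\widehat A$, use $h^0(\mathcal{O}_A(\Theta)\otimes\alpha)=1$ and $h^{i}(\mathcal{O}_A(\Theta)\otimes\alpha)=0$ for $i>0$ to identify $V^i_A(\iota_*\mathcal{O}_X(\Theta))$ with $V^{i+1}_A(\mathcal{I}_X(\Theta))$ for $i\ge 1$, settle degrees $0$ and $1$ via the four-term sequence together with the properness of $\{\alpha \mid X\subseteq\Theta_\alpha\}$, and then deduce \eqref{eq:L} from Theorem \ref{thm:alg} and the standard structure $L\otimes\mathcal{I}_Z$ of a rank-one torsion-free sheaf on a smooth variety. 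The only implicit (and harmless) assumption is that $X$ is nonempty and properly contained in $A$, which is what makes $V^0_A(\mathcal{I}_X(\Theta))$ a proper closed subset of $\widehat A$.
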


\begin{rmk}\label{rem:prod}
With notation as in the previous proposition, we observe that from Proposition \ref{lem:trfonA} and \cite{Mu1981}*{Theorem 3.13 (5)}
there is an inclusion of sheaves $\fas{I}_Z\otimes L\hookrightarrow \OO{\widehat A}(\widehat \Theta)$ 
obtained by applying the functor $\rr S_A\rr \Delta_A$ to the exact sequence 
$$0\longrightarrow \fas{I}_X(\Theta)\longrightarrow \fas{O}_A(\Theta)\longrightarrow \iota_*\OO{X}(\Theta)\longrightarrow 0.$$ 
Moreover, by taking $\fas{H}om$'s,  we get a further inclusion
 $$\OO{\widehat A}(-\widehat \Theta)\hookrightarrow L^{-1}$$ so that 
 line bundle $\OO{\widehat A}(\widehat\Theta)\otimes L^{-1}$ has one nonzero global section
which can be written as $\OO{\widehat A}(E)$ for some effective divisor $E$ on $\widehat A$. 
Suppose now that $L$ itself has one nonzero global section so that $L\simeq\OO{\widehat A} (E')$ for some effective divisor $E'$, 
then only two possibilities occur:
 \begin{enumerate}
  \item either $L$ is ample, and therefore $E=0$ and $E'\simeq \Theta$;\\
  \item or $L$ is not ample. In this case we have then that the divisor $\widehat \Theta = E+E'$ is reducible
  as $\widehat \Theta$ is a principal polarization, and hence, by the Decomposition Theorem in \cite{BL}*{Theorem 4.3.1},
$A$ is a product of nontrivial principally polarized abelian varieties.
 \end{enumerate}
\end{rmk}

\section{Proof of the main theorem}
The results stated in the Introduction are consequences of the following technical statement:
\begin{prop}\label{prop:tehc}
 Let $\iota:X\hookrightarrow A$ be a $GV$-subscheme of a principally polarized abelian variety $(A, \Theta)$ and suppose that the 
inclusion $\iota:X\hookrightarrow A$ factors through a nontrivial isogeny $\varphi$ as in the diagram below
 \[
 \xymatrix{X\ar[rr]^a\ar[dr]_i&&B\ar[dl]^\varphi\\
 &A.&}
\]
Then there is an isomorphism of complexes of sheaves
$$\mathbf R S_B\mathbf{R}\Delta_B(a_*\OO{X}\otimes\varphi^*\OO{A}(\Theta))\simeq \fas{I}_W\otimes M[-g]$$ where 
$\fas{I}_W$ is an ideal sheaf on $\widehat B$ whose zero locus does not contain any divisorial components, 
$M$ is a line bundle on $\widehat B$ such that $\widehat\varphi^*M\simeq L$, and $L$ is the line bundle appearing in \eqref{eq:L}.
\end{prop}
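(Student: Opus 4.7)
The plan is to transfer the statement of Proposition \ref{lem:trfonA}, which lives on $\widehat{A}$, across the dual isogeny $\widehat{\varphi}:\widehat{A}\to\widehat{B}$. Set $\fas{F}:=a_*\fas{O}_X\otimes\varphi^*\fas{O}_A(\Theta)$. Since $\iota=\varphi\circ a$ and $\varphi$ is finite, the projection formula gives $\iota_*\fas{O}_X(\Theta)\simeq\varphi_*\fas{F}$, so Proposition \ref{lem:trfonA} can be read as a statement about $\varphi_*\fas{F}$ on $A$.

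I would then rewrite the left-hand side of \eqref{eq:L} in terms of $\fas{F}$ using two standard compatibilities. First, the Poincaré-bundle identity $(\varphi\times\mathrm{id})^*\mathscr{P}_A\simeq(\mathrm{id}\times\widehat{\varphi})^*\mathscr{P}_B$, combined with flat base change along the Cartesian square containing $\widehat{\varphi}$, gives
$$\mathbf{R}S_A\circ\varphi_*\simeq\widehat{\varphi}^{\,*}\circ\mathbf{R}S_B.$$
Second, because an isogeny in characteristic zero is étale with trivial relative dualizing sheaf, Grothendieck duality yields $\mathbf{R}\Delta_A(\varphi_*\fas{F})\simeq\varphi_*\mathbf{R}\Delta_B(\fas{F})$. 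Combining both with Proposition \ref{lem:trfonA} produces
$$\widehat{\varphi}^{\,*}\bigl(\mathbf{R}S_B\mathbf{R}\Delta_B(\fas{F})\bigr)\simeq\mathbf{R}S_A\mathbf{R}\Delta_A(\iota_*\fas{O}_X(\Theta))\simeq \fas{I}_Z\otimes L[-g].$$
Since $\widehat{\varphi}$ is étale and surjective, its pullback is exact and faithful on coherent cohomology sheaves, so $\mathbf{R}S_B\mathbf{R}\Delta_B(\fas{F})$ must itself be concentrated in degree $g$. By Theorem \ref{thm:alg}(1) this forces $\fas{F}$ to be a $GV$-sheaf on $B$, and setting $\fas{G}:=R^g S_B\mathbf{R}\Delta_B(\fas{F})$ we obtain $\widehat{\varphi}^{\,*}\fas{G}\simeq \fas{I}_Z\otimes L$ on $\widehat{A}$.

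To finish, I would decompose $\fas{G}$ via its reflexive hull $M:=\fas{G}^{**}$. Because $Z$ has no divisorial components, $\fas{I}_Z^{**}\simeq\fas{O}_{\widehat{A}}$ and hence $(\fas{I}_Z\otimes L)^{**}\simeq L$. Since $\widehat{\varphi}$ is étale, $\widehat{\varphi}^{\,*}$ commutes with $\mathcal{H}om$ and therefore with the formation of double duals, so $\widehat{\varphi}^{\,*}M\simeq L$; in particular $\widehat{\varphi}^{\,*}M$ is a line bundle and faithfully flat descent gives that $M$ is itself a line bundle on $\widehat{B}$. The canonical inclusion $\fas{G}\hookrightarrow M$ then defines the ideal sheaf $\fas{I}_W:=\fas{G}\otimes M^{-1}$, and the identification $\widehat{\varphi}^{\,*}\fas{I}_W\simeq\fas{I}_Z$, combined with the flatness of $\widehat{\varphi}$, transports the codimension $\geq 2$ condition from $Z$ to $W$. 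The main technical obstacle in my view is not any single identity, since each compatibility is classical, but rather the bookkeeping in this last step: verifying that both the reflexive-hull decomposition and the \emph{no divisorial components} property genuinely descend along the étale map $\widehat{\varphi}$.
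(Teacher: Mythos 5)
Your proposal is correct and follows essentially the same route as the paper: the projection formula to write $\iota_*\OO{X}(\Theta)\simeq\varphi_*(a_*\OO{X}\otimes\varphi^*\OO{A}(\Theta))$, Mukai's exchange formula for isogenies (which the paper simply cites as \cite{Mu1981}*{(3.4)} and you re-derive from the Poincar\'e-bundle compatibility, making explicit the Grothendieck duality step $\mathbf{R}\Delta_A\circ\varphi_*\simeq\varphi_*\circ\mathbf{R}\Delta_B$ that the paper leaves implicit) to obtain $\widehat{\varphi}^{\,*}\mathbf{R}S_B\mathbf{R}\Delta_B(\fas{F})\simeq\fas{I}_Z\otimes L[-g]$, faithful flatness of $\widehat{\varphi}$ to conclude concentration in degree $g$, and the reflexive-hull decomposition with $\widehat{\varphi}^{\,*}M\simeq L$. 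The one point you gloss over is that the canonical map $\fas{G}\rightarrow M=\fas{G}^{**}$ is injective, which requires $\fas{G}$ to be torsion-free; the paper verifies this via the inclusion $\fas{G}\hookrightarrow\widehat{\varphi}_*\widehat{\varphi}^{\,*}\fas{G}\simeq\widehat{\varphi}_*(\fas{I}_Z\otimes L)$, though it also follows in one line from faithful flatness since $\widehat{\varphi}^{\,*}\fas{G}\simeq\fas{I}_Z\otimes L$ is torsion-free.
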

\begin{proof}
 In \cite{Mu1981}*{(3.4) p. 159} Mukai studied the behavior of the Fourier--Mukai transform with respect to an arbitrary isogeny. 
Using his result, together to Proposition \ref{lem:trfonA}, we get the following chain of isomorphisms in the derived category $\mathbf{D}(\widehat A)$:
 \begin{eqnarray}\notag
  \fas{I}_Z\otimes L[-g]&\simeq&\mathbf R S_A\mathbf R \Delta_A(\iota_*\OO{X}(\Theta))\notag\\
  & \simeq &\mathbf R S_A\mathbf R \Delta_A(\varphi_*a_*\OO{X}(\Theta)) \notag\\
  & \simeq & \mathbf R S_A\mathbf{R} \Delta_A(\varphi_*(a_*\OO{X}\otimes\varphi^*\OO{A}(\Theta)))\notag\\
  &\simeq &\widehat \varphi^*\mathbf R S_B\mathbf R \Delta_B(a_*\OO{X}\otimes\varphi^*\OO{A}(\Theta)).\notag
 \end{eqnarray}
 Therefore, by using the fact that $\varphi$ is a flat morphism, we have that 
$$\widehat\varphi^*(R^i S_B\mathbf{R}\Delta_B(a_*\OO{X}\otimes\varphi^*\OO{A}(\Theta))=0\quad \mbox{ for }\quad i<g.$$ From this it follows easily that 
 $$R^i S_B\mathbf{R}\Delta_B(a_*\OO{X}\otimes\varphi^*\OO{A}(\Theta))=0 \quad\text{for $i<g$},$$
 and therefore $$\mathbf{R} S_B\mathbf R \Delta_B(a_*\OO{X}\otimes\varphi^*\OO{A}(\Theta))\simeq \fas{F}[-g]$$ for some coherent sheaf $\fas{F}$ on $\widehat B$.\par
 By shifting complexes to the left, we get an isomorphism of sheaves
 $$\fas{I}_Z\otimes L\simeq \widehat\varphi^*{\fas{F}}$$
 from which we can deduce that $\fas{F}$ has generic rank equals to $1$. 
Furthermore, we get an inclusion of sheaves
 $$\fas{F}\hookrightarrow\widehat\varphi_*(\widehat \varphi^*\fas{F})\simeq\widehat\varphi_*(\fas{I}_Z\otimes L),$$
from which we notice that since the latter is torsion free, then so is $\fas{F}$. Therefore we can write
 $$\fas{F}\simeq\fas{I}_W\otimes M$$
where $M$ is the reflexive hull of $\fas{F}$ and $\fas{I}_W$ is an ideal sheaf whose zero locus does not contain any divisorial components.
 Finally, by applying $\fas{H} om_{\OO{\widehat A}}(-,\OO{\widehat A})$ 
to the isomorphism $\widehat\varphi^*(\fas{I}_W\otimes M)\simeq\fas{I}_Z\otimes L$,
 we get $(\widehat \varphi^* M)^{-1}\simeq L^{-1}$. 
As a consequence, we have $\widehat\varphi^*M\simeq L$ and the statement is proved.
 \end{proof}

 We now divide the proof of Theorem \ref{thmintr} into two cases: when $X$ generates $A$, and when it does not.
\subsection{Case 1: When $X$ generates $A$ (proof of Theorem \ref{thmintr} (1))}\hfill\par

Suppose that the hypotheses of Theorem \ref{thmintr} (1) hold, i.e. $\iota:X\hookrightarrow (A,\Theta)$ is an embedding of a reduced $GV$-subscheme in a $g$-dimensional 
principally polarized abelian variety such 
that $X$ generates $A$ as a group. We are going to show that the inclusion $\iota:X\hookrightarrow A$ does not factor through any nontrivial isogeny. 
By arguing by contradiction, if it did, then we would be in the same situation of diagram \eqref{eq:factors} under the extra information that $a(X)$ spans $B$. 
Then, by using Proposition \ref{prop:tehc} we can conclude that
\begin{equation}\label{def:M}  \mathbf{R}S_B\mathbf R\Delta_B(a_*\OO{X}\otimes \varphi^*\OO{A}(\Theta))\simeq \fas{I}_W\otimes M[-g].
\end{equation}
The key point of our argument is the following general:

\begin{lem}\label{lem:mample}
Let $\fas F$ be an $M$-regular sheaf on an abelian variety $A$ of dimension $g$ whose support spans $A$. If $\chi(A,\fas{F})=1$, 
then the reflexive hull of $R^g S_A\rd_A (\fas{F})$ is an ample line bundle on $\widehat A$.
\end{lem}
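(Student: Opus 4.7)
The plan is to verify that $M$ satisfies the two criteria characterizing ample line bundles on abelian varieties: \emph{nondegeneracy} (the subgroup scheme $K(M):=\ker\phi_M\subset\widehat A$ is finite) and \emph{existence of a nonzero global section}, the latter at worst after twisting $M$ by an element of $\mathrm{Pic}^0(\widehat A)$, since ampleness is preserved by such a twist. Granting both, Mumford's index theorem forces $M$ to be ample. First I unpack the setup: by Theorem~\ref{thm:alg}(1) and the $GV$-hypothesis, $\mathbf R S_A\mathbf R\Delta_A(\mathcal F)$ is concentrated in degree $g$ and the resulting sheaf has generic rank $\chi(A,\mathcal F)=1$; by part~(2) it is torsion-free. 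Arguing exactly as in Proposition~\ref{lem:trfonA}, one obtains
\[
\mathbf R S_A\mathbf R\Delta_A(\mathcal F)\simeq(\mathcal I_Z\otimes M)[-g],
\]
where $M$ is the reflexive hull, hence a line bundle on the smooth variety $\widehat A$, and $\mathcal I_Z$ is an ideal sheaf with $\mathrm{codim}\,Z\geq 2$.

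For existence of a section (up to twist), apply $\mathbf R\Gamma(\widehat A,-)$ to the identity above, twisted by an arbitrary $P_\alpha\in\mathrm{Pic}^0(\widehat A)$, which via Mukai's translation formulas corresponds to replacing $\mathcal F$ by $T_{-\alpha}^*\mathcal F$. Using the standard computation $\mathbf R p_{A,*}\mathcal P\simeq k(0_A)[-g]$ and the projection formula, one identifies $\mathbf R\Gamma(\widehat A,\mathcal I_Z\otimes M\otimes P_\alpha)$ with a derived fiber of $\mathbf R\Delta_A(\mathcal F)$ at $-\alpha$. Taking $H^0$ exhibits it as the dual of the ordinary fiber $\mathcal F|_{-\alpha}$, which is nonzero whenever $-\alpha\in\mathrm{Supp}(\mathcal F)$. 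The inclusion $\mathcal I_Z\otimes M\hookrightarrow M$ then yields $h^0(\widehat A,M\otimes P_\alpha)\geq 1$ for any such $\alpha$.

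Nondegeneracy is the subtle point, and where the spanning hypothesis on $\mathrm{Supp}(\mathcal F)$ enters. Argue by contradiction: assume $K:=K(M)^0$ is a positive-dimensional abelian subvariety of $\widehat A$. As $K$ is connected and contained in $\ker\phi_M$, standard descent theory produces, possibly after a harmless $\mathrm{Pic}^0$-twist of $M$, a line bundle $M'$ on $Y:=\widehat A/K$ with $\pi^*M'\simeq M$, where $\pi:\widehat A\to Y$ is the quotient. Dually, $\widehat\pi:\widehat Y\hookrightarrow A$ is a closed immersion with image $A':=\widehat\pi(\widehat Y)$ a proper abelian subvariety of $A$. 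The standard compatibility of Fourier-Mukai with isogenies (Mukai's formula \cite{Mu1981}*{(3.4)}) then forces $\mathbf R\widehat S_A(M)$ to be supported on $A'$. On the other hand, Mukai inversion applied to the Step~1 identity yields $\mathbf R\widehat S_A(\mathcal I_Z\otimes M)\simeq(-1_A)^*\mathbf R\Delta_A(\mathcal F)$, whose support equals $-\mathrm{Supp}(\mathcal F)$. Comparing these via the exact triangle induced by $0\to\mathcal I_Z\otimes M\to M\to M\otimes\mathcal O_Z\to 0$ and using $\mathrm{codim}\,Z\geq 2$, one concludes that $\mathrm{Supp}(\mathcal F)\subseteq A'$, a proper abelian subvariety, contradicting that $\mathrm{Supp}(\mathcal F)$ generates $A$ as a group.

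The main obstacle is this last comparison: propagating the support constraint from $\mathbf R\widehat S_A(M)$ to $\mathbf R\widehat S_A(\mathcal I_Z\otimes M)$ across the codimension-$\geq 2$ perturbation $M\otimes\mathcal O_Z$. This will require careful reflexive-hull and codimension bookkeeping on the $\widehat A$-side to ensure that the ideal $\mathcal I_Z$ does not artificially enlarge the support of the Fourier-Mukai transform on the $A$-side.
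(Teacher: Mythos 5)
Your Step~1 and Step~2 are correct, and Step~2 is in fact the computational heart of the paper's own proof: the identity $H^0(\widehat A,\mathcal I_Z\otimes M\otimes P_\alpha)\simeq(\mathcal F\otimes k(-\alpha))^\vee$ is exactly what the paper obtains (phrased contrapositively, as $\mathrm{Hom}_{\mathbf D(A)}(\mathcal F,\mathcal O_x)$ after applying the equivalence $\mathbf R\widehat S_A$ to $\mathrm{Hom}(\mathbf R\Delta_{\widehat A}(\mathcal I_Z\otimes M),\mathcal P_x)$). The genuine gap is the one you flag yourself, and it is not repairable by ``careful bookkeeping'': in your nondegeneracy step, the support comparison across the triangle $\mathcal I_Z\otimes M\to M\to M\otimes\mathcal O_Z$ fails because the Fourier--Mukai transform is not local. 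Even though $\mathrm{codim}\,Z\geq 2$, the complex $\mathbf R\widehat S_A(M\otimes\mathcal O_Z)$ will in general be supported on \emph{all} of $A$ (already for $Z$ a single point, $M\otimes\mathcal O_Z$ is a skyscraper and its transform is a line bundle of full support). So the triangle only yields $-\mathrm{Supp}(\mathcal F)=\mathrm{Supp}\,\mathbf R\widehat S_A(\mathcal I_Z\otimes M)\subseteq A'\cup\mathrm{Supp}\,\mathbf R\widehat S_A(M\otimes\mathcal O_Z)$, which is vacuous; the enlargement of support is a real phenomenon of the transform, not an artifact to be controlled by codimension estimates on the $\widehat A$-side.

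The repair is already in your hands, and it is what the paper actually does: use your Step~2 identity for \emph{every} $\alpha$ instead of comparing supports of transforms. If $M$ is not ample, then either $M$ is nondegenerate of positive index, in which case $h^0(M\otimes P_\alpha)=0$ for all $\alpha$ by Mumford's index theorem; or $K=K(M)^0$ is positive-dimensional, in which case $h^0(M\otimes P_\alpha)>0$ forces $(M\otimes P_\alpha)|_K$ to be trivial, confining $\alpha$ to a fixed translate of the proper abelian subvariety $\mathrm{Pic}^0(\widehat A/K)\subset A$ --- your $A'$, reached without ever transforming $M$. Either way $V^0(M)=\{\alpha : h^0(M\otimes P_\alpha)>0\}$ lies in (a translate of) a proper abelian subvariety $G\subsetneq A$. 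Since $\mathcal I_Z\otimes M\otimes P_\alpha\hookrightarrow M\otimes P_\alpha$, your Step~2 gives $\pm\mathrm{Supp}(\mathcal F)\subseteq V^0(M)\subseteq G$, contradicting the spanning hypothesis. This is precisely the paper's argument: it asserts that if $L$ is not ample there is a proper abelian subvariety $G\subsetneq A$ with $H^0(\widehat A,L\otimes\mathcal P_x)=0$ for $x\notin G$, and then concludes $\mathrm{Supp}(\mathcal F)\subseteq G$ from $\mathrm{Hom}(\mathcal F,\mathcal O_x)=0$. In short: keep your Steps~1--2 and your case division, but replace the support-of-transform comparison by the $V^0$-containment; nondegeneracy and ampleness then come out together, rather than as separate criteria.
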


\begin{proof}
Denote by $L$ the reflexive hull of $R^g S_A\rd_A (\fas{F})$. By Theorem \ref{thm:alg} it is a line bundle on $\widehat{A}$ such that  
$$\mathbf{R}S_A\mathbf R \Delta_A(\fas{F})\simeq \fas{I}\otimes L[-g]$$
for some ideal sheaf $\fas{I}$ on $\widehat A$.
Moreover, by applying Mukai's inversion theorem we get an isomorphism
\begin{equation}\label{eq:formula}
 \mathbf{R} \widehat S_A\mathbf{R}\Delta_{\widehat{A}}(\fas{I}\otimes L)\simeq \fas{F}[-g].
\end{equation}

 Let $\fas P$ be a Poincar\'{e} line bundle on $\widehat A\times A$ and denote by $\fas P_x$ the restriction $\fas P_{|\widehat{A}\times \{x\}}$.
 If by contradiction $L$ were not ample, then there would exist a proper abelian subvariety 
 $G\subsetneq A$ such that $H^0(\widehat A, L\otimes\fas P_{x})=0$ for any $x\notin G$. Therefore
\begin{eqnarray}\notag
 0 & = & H^0(\widehat A,\fas{I}\otimes L\otimes\fas P_{x})\\\notag
& \simeq & \mathrm{Hom}_{\mathbf{D}(\widehat A)}(\OO{\widehat A},\fas{I}\otimes
L\otimes \fas P_{x})\\\notag
&\simeq & \mathrm{Hom}_{\mathbf{D}(\widehat A)}(\rd_{\widehat A}(\fas{I}\otimes L),\fas P_{x}),\notag
\end{eqnarray}
and by applying the Fourier--Mukai transform $\mathbf R \widehat S_A$, 
together to the formulas \eqref{Mukai duality} and \eqref{eq:formula}, we would get
\begin{eqnarray}\notag
0 & = & \mathrm{Hom}_{\mathbf{D}(A)}(\rr \widehat S_A\circ\rd_{\widehat A}(\fas{I}\otimes L),\rr \widehat S_A(\fas P_{x}))\notag\\\notag
& \simeq & \mathrm{Hom}_{\mathbf{D}(A)} (\fas{F}[-g],\fas O_{x}[-g])\label{spiega}\\\notag
&\simeq & \mathrm{Hom}_{\mathbf{D}(A)}(\fas{F},\fas O_{x})
\end{eqnarray}
where $\fas O_{x}$ denotes the skyscraper sheaf at $x$.
This says that the point $x$ does not belong to the support of $\fas{F}$. 
Thus we conclude that $\mathrm{Supp}(\fas{F})\subset G$ which contradicts the hypothesis that $\mathrm{Supp}(\fas{F})$ generates $A$.
\end{proof}

\begin{cor}\label{cor:mample}
 The line bundle $M$ in \eqref{def:M} is ample on $\widehat B$.
\end{cor}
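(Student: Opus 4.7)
The plan is to apply Lemma \ref{lem:mample} to the sheaf $\fas F := a_*\OO{X}\otimes \varphi^*\OO{A}(\Theta)$, viewed as a sheaf on $B$. If I can verify the three hypotheses of that lemma for $\fas F$, then the reflexive hull of $R^g S_B\mathbf{R}\Delta_B(\fas F)$ is ample on $\widehat B$; and by Proposition \ref{prop:tehc} this reflexive hull is precisely $M$, since $\fas I_W$ has zero locus of codimension at least two on the smooth variety $\widehat B$, so $(\fas I_W\otimes M)^{**}\simeq M$.

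The three verifications I would carry out are the following. First, Proposition \ref{prop:tehc} gives $\mathbf{R}S_B\mathbf{R}\Delta_B(\fas F)\simeq (\fas I_W\otimes M)[-g]$, so the cohomology sheaves $R^i S_B\mathbf{R}\Delta_B(\fas F)$ vanish for $i\neq g$, while $R^g S_B\mathbf{R}\Delta_B(\fas F)=\fas I_W\otimes M$ is torsion free (an ideal sheaf tensored with a line bundle on a smooth variety). Parts (1) and (2) of Theorem \ref{thm:alg} then upgrade $\fas F$ from $GV$ to $M$-regular. Second, I need to check that $\mathrm{Supp}(\fas F)=a(X)$ spans $B$: if instead $a(X)$ were contained in a proper abelian subvariety $B'\subsetneq B$, then $\iota(X)=\varphi(a(X))$ would lie in the proper abelian subvariety $\varphi(B')\subsetneq A$, contradicting the hypothesis that $X$ generates $A$. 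Third, the projection formula and the finiteness of $\varphi$ give $\varphi_*\fas F\simeq \iota_*\OO{X}(\Theta)$, so
\[\chi(B,\fas F)=\chi(A,\varphi_*\fas F)=\chi(A,\iota_*\OO{X}(\Theta))=1,\]
the final equality being part of Proposition \ref{lem:trfonA}.

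Of the three checks, only the spanning condition genuinely uses the hypothesis that $X$ generates $A$; the other two are essentially structural consequences of results already recalled in the excerpt. I do not anticipate a real obstacle here, as the corollary is a direct application of Lemma \ref{lem:mample} to the sheaf $\fas F$ produced by Proposition \ref{prop:tehc}, with the reflexive-hull identification $(\fas I_W\otimes M)^{**}\simeq M$ being the only mild point to keep track of.
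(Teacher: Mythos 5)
Your proposal is correct and follows essentially the same route as the paper: the paper likewise deduces from Proposition \ref{prop:tehc} and Theorem \ref{thm:alg} that $a_*\OO{X}\otimes\varphi^*\OO{A}(\Theta)$ is $M$-regular with $\chi=1$, notes that its support $a(X)$ spans $B$ because $X$ generates $A$, and applies Lemma \ref{lem:mample}. Your write-up merely makes explicit the details the paper leaves implicit (the projection-formula computation of $\chi$, the spanning argument via $\varphi(B')\subsetneq A$, and the identification $M\simeq(\fas{I}_W\otimes M)^{**}$), all of which are correct.
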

\begin{proof}
From Proposition \ref{prop:tehc} and Theorem \ref{thm:alg} we deduce that $a_*\OO{X}\otimes\varphi^*\OO{A}(\Theta)$ is $M$-regular with Euler characteristic equals to $1$. 
The hypotheses of our setting grant that\linebreak $\mathrm{Supp}\,\big(a_*\OO{X}\otimes\varphi^*\OO{A}(\Theta)\big)=a(X)$ spans $B$. 
The corollary then follows by the lemma above.
\end{proof}

As a consequence of the previous discussion, the line bundle $L\simeq\widehat\varphi^*M$ is ample too, and therefore, by Remark \ref{rem:prod}, it is a principal polarization. 
We deduce immediately that $\widehat \varphi$ is an isomorphism, and thus so is $\varphi$.

\subsection{Case 2: When $X$ does not generate $A$ (proof of Theorem \ref{thmintr} (2))}\hfill\par
Suppose now that $X$ generates a proper abelian subvariety $J=\langle X \rangle\subsetneq A$. 
Then by Poincar\'{e}'s reducibility theorem
there exists an abelian subvariety $B\subset A$ such that the restriction $\psi:J\times B\rightarrow A$ 
of the multiplication map from $A$ to $J\times B$ is an isogeny such that
\begin{equation}\label{varphibox}
\psi^* \fas O_A(\Theta)\simeq \fas O_A(\Theta)_{|J}\boxtimes 
\fas O_A (\Theta)_{|B}.
\end{equation}
From the very definition of $\psi$ there is a commutative diagram
% \centerline{ 
\begin{align}\label{eq:factors}
\xymatrix@=32pt{  
& X \ar[d]^{\iota}\ar[r]^{\iota'} & J \ar[d]^{(\mathrm{id}_J, 0_B)}\\ 
& A  & J\times B\ar[l]^{\psi}}
\end{align}
\noindent where $\iota$ and $\iota'$ are the inclusion maps of $X$ in $A$ and $X$ in $J$ respectively, 
and $(\mathrm{id}_J,0_B)$ is the morphism taking $p$
to $(p,0_B)$.
Finally, we denote by $a:X\rightarrow J\times B$ the composition $(\mathrm{id}_J,0_B) \circ \iota'$.

Observe that by Proposition \ref{lem:trfonA} we have an isomorphism
$$\mathbf{R}S_A\mathbf{R}\Delta_A(\iota_*\OO{X}(\Theta))\simeq \fas I_Z\otimes L[-\dim A]$$ where $\fas I_Z$ is a sheaf of ideals such that 
$Z$ does not contain any divisorial component and $L$ is a line bundle on $\widehat A$.
Moreover, by Proposition \ref{prop:tehc}, there is an isomorphism
\begin{equation}\label{eq:nongen}
\mathbf{R}S_{J\times B}\mathbf{R}\Delta_{J\times B}(a_*\OO{X}\otimes\psi^*\OO{A}(\Theta))\simeq \fas{I}_W\otimes M[-g]
\end{equation}
where $\fas I_W$ is an ideal sheaf on $\widehat J\times \widehat B$ whose zero locus does not contain any divisorial component, and $M$ is a line bundle on
$\widehat J\times \widehat B$ such that $\widehat \psi ^* M\simeq L$.
Finally we notice that the left hand side of the above isomorphism can be written as follows:
\begin{align*}
& \mathbf{R}S_{J\times B}\mathbf{R}\Delta_{J\times B}(a_*\OO{X}\otimes\psi^*\OO{A}(\Theta))\\
&\simeq \mathbf{R}S_{J\times B}\mathbf{R}\Delta_{J\times B}(a_*\OO{X}\otimes(\OO{A}(\Theta)_{|J}\boxtimes\OO{A}(\Theta)_{|B}))\\
&\simeq  \mathbf{R}S_{J\times B}\mathbf{R}\Delta_{J\times B}(\iota'_*(\OO{X}\otimes\OO{A}(\Theta)_{|J})\boxtimes(\fas O_{0_B}\otimes\OO{A}(\Theta)_{|B}))\\
& \simeq \mathbf R S_{J}\mathbf{R}\Delta_{J}(\iota'_*\OO{X}\otimes\OO{A}(\Theta)_{|J})\boxtimes\mathbf R S_B(\mathbf R \Delta_B(\fas O_{0_B}))\\
& \simeq \mathbf R S_{J}\mathbf{R}\Delta_{J}(\iota'_*\OO{X}\otimes\OO{A}(\Theta)_{|J})\boxtimes\OO{\widehat B}[-\dim \widehat B].
\end{align*}
where $\fas O_{0_B}$ denotes the skyscraper sheaf at $0_B$.
We deduce that $$R^i S_{J}\mathbf{R}\Delta_{J}(\iota'_*\OO{X}\otimes\OO{A}(\Theta)_{|J})=0 \quad\mbox{ for every }\quad i<\dim J,$$ and that 
$R^{\dim J} S_{J}\mathbf{R}\Delta_{J}(\iota'_*\OO{X}\otimes\OO{A}(\Theta)_{|J})$ is a torsion free sheaf of rank $1$.
Therefore we can write 
\begin{align}\label{M}
\mathbf R S_{J}\mathbf{R}\Delta_{J}(\iota'_*\OO{X}\otimes\OO{A}(\Theta)_{|J})\simeq\fas{I}_{W'}\otimes M'[-\dim J]
\end{align}
where $\fas{I}_{W'}$ is an ideal sheaf such that its zero locus does not contain any divisorial components and $M'$ is a line bundle on $\widehat J$. 
Then, by combining \eqref{M} with \eqref{eq:nongen} we obtain a series of isomorphisms
$$\fas{I}_W\otimes M[-g]\simeq \big(\fas{I}_{W'}\otimes M'[-\dim J]\big)\boxtimes\OO{\widehat B}[-\dim \widehat B]\simeq \big(\fas{I}_{W'}\otimes M'\big)
\boxtimes\OO{\widehat B}[-g]$$ from which we see 
that $M\simeq M'\boxtimes \OO{\widehat B}$ since they are reflexive hulls of isomorphic sheaves.

Finally, by using Lemma \ref{lem:mample}, and by reasoning as in Corollary \ref{cor:mample}, we get the following
\begin{cor}
 The line bundle $M'$ in \eqref{M} is ample on $\widehat J$.
\end{cor}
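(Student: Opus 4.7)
The plan is to verify the three hypotheses of Lemma \ref{lem:mample} for the sheaf $\fas{F} := \iota'_*\OO{X}\otimes\OO{A}(\Theta)_{|J}$ on the abelian variety $J$, and then to read off the ampleness of $M'$ directly from its conclusion. This mirrors exactly the argument used in Corollary \ref{cor:mample}, only now carried out on $J$ rather than on $B$.

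First I would establish that $\fas{F}$ is $M$-regular with $\chi(J,\fas{F})=1$. The isomorphism \eqref{M} shows that $\mathbf R S_J \mathbf R \Delta_J(\fas{F})$ is concentrated in degree $\dim J$, so by Theorem \ref{thm:alg}(1) the sheaf $\fas{F}$ is $GV$, and the same result yields $\chi(J,\fas{F}) = \rk(\fas{I}_{W'}\otimes M')=1$. Moreover, $R^{\dim J}S_J\mathbf R\Delta_J(\fas{F}) \simeq \fas{I}_{W'}\otimes M'$ is torsion free, being the twist of an ideal sheaf by a line bundle on the smooth variety $\widehat J$; hence Theorem \ref{thm:alg}(2) promotes $GV$ to $M$-regularity. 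Second, I would note that $\mathrm{Supp}\,\fas{F} = \iota'(X) = X$, which generates $J$ by the standing hypothesis $J=\langle X\rangle$.

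With these three inputs in place, Lemma \ref{lem:mample} applied on $J$ yields that the reflexive hull of $R^{\dim J}S_J\mathbf R\Delta_J(\fas{F})$ is an ample line bundle on $\widehat J$; by \eqref{M} this reflexive hull is precisely $M'$, and the corollary follows. I do not foresee a genuine obstacle here, since all of the technical content has been absorbed into Lemma \ref{lem:mample} and into the computation leading to \eqref{M}; what remains is a routine bookkeeping check of hypotheses, entirely parallel to the proof of Corollary \ref{cor:mample}.
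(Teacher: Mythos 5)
Your proposal is correct and follows exactly the paper's intended argument: the paper proves this corollary ``by using Lemma \ref{lem:mample}, and by reasoning as in Corollary \ref{cor:mample},'' which is precisely your verification that $\iota'_*\OO{X}\otimes\OO{A}(\Theta)_{|J}$ is $M$-regular with $\chi=1$ (via Theorem \ref{thm:alg} and the isomorphism \eqref{M}) and that its support $X$ generates $J$. Your bookkeeping, including identifying $M'$ as the reflexive hull of $\fas{I}_{W'}\otimes M'$, matches the paper's route.
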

Therefore we can conclude that 
$$ L\simeq \widehat\psi^*M\simeq \widehat\psi^*(M'\boxtimes\OO{\widehat B})$$ 
is effective but not ample. Furthermore, from Remark \ref{rem:prod} $\widehat A$ can 
be written as a nontrivial product $\widehat A\simeq \widehat A_1\times \widehat A_2$ and, 
up to switching factors, there is an isomorphism $L\simeq p_1^*(\OO{\widehat A_1}(\Theta_1))$ where $\Theta_1$ is a principal polarization on $\widehat A_1$.
However more is true. In fact, by K\"unneth formula, we obtain inequalities 
$$0<h^0(\widehat J, M')=h^0(\widehat J, M')\cdot h^0(\widehat B,\OO{\widehat B})\le h^0(\widehat A, L)=1$$ which imply 
that $M'$ is a principal polarization on $\widehat J$. Furthermore, since $L$ is obviously trivial on $\widehat\psi^{-1}(\{0_{\widehat J}\}\times\widehat B)$, 
we have the following commutative diagram:
$$
\xymatrix{\widehat A\simeq \widehat A_1\times \widehat A_2\ar[d]_{p_1}\ar[rr]^{\widehat\psi}&&\widehat J\times \widehat B\ar[d]^{q_J}\\
\widehat A_1\ar[rr]_{\pi\,\,\,\,\,\,\,\,\,\,\,\,\,\,\,\,\,\,\,\,\,\;\;\;\;\;\;\;\;\;\;\;}&&\widehat J\simeq \widehat A_1/p_1\big(\widehat  \psi^{-1}
\big(\{0_{\widehat J}\}\times \widehat B\big)\big)}
$$
Finally, by projection formula we have the following chain of equalities of algebraic sets
$$A_1\times\{0_{A_2}\}=V^0_{\widehat A_1\times \widehat A_2}(L)=\psi (V^0_{\widehat J\times \widehat B}(M'\boxtimes \OO{\widehat B}))=
\psi(J\times\{0_{B}\})$$
from which we conclude that $\pi$ is an isogeny. We deduce that $A_1$ and $J$ have the same dimension and further that
$h^0(A_1, \pi^*M')=1$ since $L\simeq p_1^*\pi^*M'$. Therefore
we conclude that $\pi$ is an isomorphism and that $\widehat J$ is a direct factor of 
$\widehat A$ such that $\OO{\widehat A}(\Theta)_{|\widehat J}$ is a principal polarization. This immediately says that 
$J$ is a direct factor of $A$ and that $\OO{A}(\Theta)_{| J}$ 
is a principal polarization on $J$.

In order to complete the proof of Theorem \ref{thmintr} we need to show that $\fas{I}_{X/J}\otimes\OO{A}(\Theta)_{|J}$ is a $GV$-sheaf. 
To this end, consider the short exact sequence
$$
0\longrightarrow \fas{I}_{X/J}\otimes\OO{A}(\Theta)_{|J}\longrightarrow \OO{A}(\Theta)_{|J} \longrightarrow \iota'_*\OO{X}\otimes \OO{A}(\Theta)_{|J}\longrightarrow 0
$$
to which we apply the functor $\mathbf{R}S_J\mathbf R\Delta_J$. By taking cohomology
we see that 
$$R^i S_J\mathbf R\Delta_J(\fas{I}_{X/J}\otimes\OO{A}(\Theta)_{|J})\simeq R^{i+1}S_J\mathbf R\Delta_J(\fas{O}_{X}\otimes\OO{A}(\Theta)_{|J})=0
\quad\mbox{ for }\quad  i<g-1.$$
In addition, we have that $$R^{g-1}S_J\mathbf R\Delta_J(\fas{I}_{X/J}\otimes\OO{A}(\Theta)_{|J})=0$$ 
since it is the kernel of a nonzero morphism between torsion free sheaves of equal rank.
We conclude by invoking Theorem \ref{thm:alg}

\section{An application}
We apply Theorem \ref{thmintr} to provide a new proof of a theorem of Clemens--Griffiths \cite{CG1972}*{Theorem 11.19 and (0.8)}.

\begin{thm}[\cite{CG1972}*{Theorem 11.19}]
Let $Y\subset\mathbb \rp^4$ be a smooth cubic threefold and let $S$ be the Fano scheme parametrizing 
lines of $\rp^4$ contained in $Y$. Moreover, denote by $\mathrm{Alb}(S)$ the Albanese variety of $S$ and by $J(Y)$ the intermediate Jacobian of $Y$.
 Then $\mathrm{Alb}(S)$ and $J(Y)$ are isomorphic.
\end{thm}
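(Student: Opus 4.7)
The plan is to combine the $GV$-subscheme property of the Fano surface of lines with the universal property of the Albanese and to apply Theorem \ref{thmintr} to collapse any nontrivial isogeny in the induced factorization.

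First I would fix a reference line $\ell_0\subset Y$ and consider the Abel--Jacobi map
\[\alpha\colon S\longrightarrow J(Y),\quad \ell\mapsto [\ell-\ell_0],\]
which is a closed embedding by the classical work of Clemens--Griffiths and Tjurin. By H\"oring's theorem (\cite{Ho2007}*{Theorem 1.2}), $\alpha(S)$ is a $GV$-subscheme of the principally polarized abelian variety $(J(Y),\Theta)$.

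Next I would verify that $\alpha(S)$ generates $J(Y)$ as an abelian group. It is classical that $(J(Y),\Theta)$ is indecomposable as a principally polarized abelian variety: its theta divisor has a single isolated triple point as unique singularity, whereas the theta divisor of a nontrivial product of ppav's is singular along a positive-dimensional locus. Hence if $\alpha(S)$ spanned only a proper abelian subvariety, Theorem \ref{thmintr}(2) would force $J(Y)$ to split, contradicting the indecomposability. So $\alpha(S)$ spans $J(Y)$.

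Now I would invoke the universal property of the Albanese variety to factor $\alpha$ as
\[S\xrightarrow{\mathrm{alb}_S}\mathrm{Alb}(S)\xrightarrow{\;\varphi\;}J(Y),\]
for a unique homomorphism $\varphi$. The surjectivity of $\varphi$ follows from $\alpha(S)$ generating $J(Y)$, and then the Hodge-theoretic equality $\dim \mathrm{Alb}(S)=h^{1,0}(S)=5=\dim J(Y)$ ensures that $\varphi$ has finite kernel, i.e., is an isogeny. Finally, since $\alpha=\varphi\circ\mathrm{alb}_S$ is the closed embedding of a $GV$-subscheme into $J(Y)$ and $S$ generates $J(Y)$, Theorem \ref{thmintr}(1) rules out any factorization through a nontrivial isogeny, forcing $\varphi$ to be an isomorphism.

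The main obstacle in this plan is the dimension equality $\dim \mathrm{Alb}(S)=5$, which is not produced by the $GV$-formalism of the paper itself. One needs an independent input here: either the classical Griffiths identification $H^0(\Omega^1_S)\simeq H^{2,1}(Y)$ coming from the infinitesimal Abel--Jacobi map, or the direct computation of the Hodge numbers of the Fano surface of a smooth cubic threefold. Once that is granted, the remainder of the argument is a short application of the two parts of Theorem \ref{thmintr}.
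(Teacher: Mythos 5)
Your proof is correct, and its endgame coincides with the paper's: both factor the embedding $S\hookrightarrow J(Y)$ through the Albanese map and apply Theorem \ref{thmintr}(1) to force the induced isogeny to have degree one. Where you genuinely diverge is in how you establish that the induced homomorphism $\varphi\colon \mathrm{Alb}(S)\to J(Y)$ is an isogeny and that $S$ generates $J(Y)$. The paper simply quotes the classical theorem of Fano--Gherardelli and Altman--Kleiman--Murre that $\varphi$ is an isogeny, and reads off generation as an immediate consequence. You instead reconstruct both facts: generation is deduced from the indecomposability of $(J(Y),\Theta)$ by running Theorem \ref{thmintr}(2) in contrapositive form, and then surjectivity of $\varphi$ together with the dimension count $\dim\mathrm{Alb}(S)=q(S)=5=\dim J(Y)$ yields the isogeny. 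This is a legitimate trade of classical inputs: you drop the isogeny theorem but must import the indecomposability of the intermediate Jacobian, which rests on the Clemens--Griffiths/Beauville analysis of $\mathrm{Sing}\,\Theta$ (for indecomposability one only needs $\dim \mathrm{Sing}\,\Theta<3$, since any nontrivial product of principally polarized abelian varieties of total dimension $5$ has $\mathrm{Sing}\,\Theta\supseteq \Theta_1\times\Theta_2$ of dimension $3$); in exchange your argument showcases part (2) of the main theorem, which the paper's proof never uses. Two minor points to keep straight: the $GV$ condition is translation-invariant, so the choice of the base line $\ell_0$ in the Abel--Jacobi map is harmless; and your inputs ($\dim\mathrm{Alb}(S)=h^{1,0}(S)$ via Hodge symmetry, the triple-point description of $\mathrm{Sing}\,\Theta$) are complex-analytic, whereas the citations in the paper are valid over any algebraically closed field of characteristic $\neq 2$ --- over the paper's characteristic-zero ground field your version goes through by the Lefschetz principle, but it is genuinely tied to $\mathbf{C}$-style arguments. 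You correctly flag $q(S)=5$ as an unavoidable external input; the paper relies on it in exactly the same way.
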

\begin{proof}
They are classical results that the irregularity $q(S)=\dim H^1(S,\fas{O}_S)$ of $S$ equals $5$, and that the map 
$\pi: \mathrm{Alb}(S)\rightarrow J(Y)$ induced by the universal property of the Albanese variety is an isogeny.
For instance, this was observed by Fano and Gherardelli over the complex numbers, and by Altman--Kleiman and Murre over any algebraically closed field of characteristic 
different from $2$ \cites{Gh1967,AK1977,Mu1974}. In particular, we notice that $S$ generates $J(Y)$ as a group.
Moreover, H\"{o}ring in \cite{Ho2007} proves that $S$ is a $GV$-subscheme in $J(Y)$ by only
exploiting the structure of a Prym variety on the intermediate Jacobian $J(Y)$.
Thus all the hypotheses of Theorem \ref{thmintr} (1) are satisfied and so $\pi$ is an isomorphism.
 \end{proof}

\bibliography{minimal.bib}

\end{document}